\documentclass[a4paper, 10pt]{article}
\usepackage{amsmath}
\usepackage{amsfonts}
\usepackage{amssymb}
\usepackage[english]{babel}
\usepackage{graphicx}
\usepackage{amsthm}
\usepackage{pgf,tikz}
\usetikzlibrary{arrows}
\usepackage[title]{appendix}

\newtheorem{thm}{Theorem}[section]

\newtheorem{lem}[thm]{Lemma}

\newtheorem{ex}[thm]{Example}
\theoremstyle{definition}

\theoremstyle{remark}
\newtheorem{rem}{Remark}[section]
\usetikzlibrary{snakes}
\usepackage{color}

\numberwithin{equation}{section}

\newcommand{\HH}{\mathcal{H}}
\newcommand{\EE}{\mathcal{E}}
\newcommand{\TT}{\mathcal{T}}
\newcommand{\AAA}{\mathbb{A}}

\newcommand{\Z}{\mathbb{Z}}
\newcommand{\Q}{\mathbb{Q}}

\newcommand{\ol}{\overline}
\date{}
\begin{document}
\title{On the full automorphism group of a Hamiltonian cycle system of odd order
}

\author{
Marco Buratti
\thanks{Dipartimento di Matematica e Informatica, Universit\`a degli Studi di Perugia, via Vanvitelli 1 - 06123 Italy,
email: buratti@dmi.unipg.it}\quad\quad
Graham J. Lovegrove
\thanks{Department of Mathematics and Statistics, The Open University, Walton Hall, Milton Keynes MK7 6AA, U.K.,
email: graham.lovegrove@virgin.net}\quad\quad
Tommaso Traetta
\thanks{Dipartimento di Matematica e Informatica, Universit\`a degli Studi di Perugia, via Vanvitelli 1 - 06123 Italy,
email: traetta@dmi.unipg.it}}

\maketitle
\begin{abstract}
\noindent 
It is shown that a necessary condition for an abstract group $G$ to be the full automorphism group of a Hamiltonian cycle system is that $G$ has odd order or it is either binary, or the affine linear group AGL($1,p$) with $p$ prime. We show that this condition is also sufficient except possibly for the class of non--solvable binary groups.
\end{abstract}

\noindent {\bf Keywords:} Hamiltonian cycle system; automorphism group.

\eject
\section{Introduction}
Denote as usual with $K_v$ the complete graph on $v$ vertices.
A Hamiltonian cycle system of order $v$ (briefly, an HCS$(v)$) is a set
of Hamiltonian cycles of $K_v$ whose edges partition the edge-set of $K_v$. It is very well known \cite{L} that an HCS($v$) exists if and only if $v$ is odd and $v \geq 3$.

Two HCSs are {\it isomorphic} if there exists a bijection ({\it isomorphism}) between their set of vertices
turning one into the other.  An automorphism of a HCS$(v)$ is an isomorphism of it with itself, i.e., a permutation of the vertices of $K_v$ leaving it invariant.

HCSs possessing a non-trivial automorphism group have drawn a certain attention (see \cite{HCShort} for a short recent survey on this topic). Detailed results can be found in: \cite{BD,JM} for the {\it cyclic groups}; \cite{BM} for the {\it dihedral groups}; \cite{BBM} for the {\it doubly transitive groups};
\cite{B} for the {\it regular HCS}; \cite{AKN, BS} for the {\it symmetric HCS}; \cite{BM2} for those being both cyclic and symmetric; \cite{BOP, BRT, BBRT} for the {\it $1$-rotational HCS} and {\it $2$--pyramidal HCS}.

Given a particular class of combinatorial designs, to establish whether any abstract
finite group is the full automorphism group of an element
in the class is in general a quite hard task.
Some results in this direction can be found in \cite{Me3}
for the class of Steiner triple and quadruple
systems, in \cite{Me1} for the class of finite projective planes, in \cite{BMR}
for the class of non--Hamiltonian $2-$factorizations of the complete graph, and very recently in \cite{GGL, Love08, Love14} for the class of cycle systems.

This paper deals with the following problem:
\begin{quote}
{\it Determining the class $\mathcal{G}$ of finite groups that can be seen as the full automorphism group of a Hamiltonian cycle system of odd order. }
\end{quote}
As a matter of fact, 
some partial answers are known. In \cite{BRT} it is proven that any symmetrically sequenceable group lies in $\mathcal{G}$ (see \cite{BBRT} for HCSs of even order). In particular, any solvable binary group (i.e. with a unique element of order $2$) except for the quaternion group $\Q_8$, is symmetrically sequenceable \cite{AI}, hence it can be seen as the full automorphism group of an HCS. In \cite{BBM} it is shown that 
the affine linear group $AGL(1,p)$, $p$ prime, is the full automorphism group of the unique doubly transitive HCS$(p)$.

Here we prove that any finite group $G$ of odd order is the full automorphism group of an HCS. This result will be achieved in Section \ref{fullaut} by means of a new doubling construction described in Section \ref{doubling}. On the other hand, in Section \ref{fullaut} we also prove that if $G \in \mathcal{G}$ has even order, then $G$ is necessarily binary or the affine linear group AGL($1,p$) with $p$ prime; still, we show that $\Q_8$ lies in $\mathcal{G}$. We obtain, in this way, the major result of this paper:
\begin{thm}\label{them0}
   If a finite group $G$ is the full automorphism group of a Hamiltonian cycle system of odd order then $G$ has odd order or it is either binary, or the affine linear group AGL($1,p$) with $p$ prime. The converse is true except possibly in the case of G binary non-solvable.
\end{thm}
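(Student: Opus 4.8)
The plan is to treat the two implications separately, the common thread being a tight analysis of how an involution of $G$ can act on an HCS of odd order $v$.

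For the necessity, suppose $G=\mathrm{Aut}(\mathcal{H})$ has even order and fix an involution $\tau\in G$. Since $v$ is odd, $\tau$ fixes an odd number $f\ge 1$ of vertices. The key is a fixed-edge count. The edges of $K_v$ fixed by $\tau$ are those joining two fixed vertices together with those joining the two points of a transposition, so their number is $\binom{f}{2}+\frac{v-f}{2}$. Because the cycles of $\mathcal{H}$ partition the edge set and $\tau$ permutes them, every $\tau$-fixed edge must lie in a $\tau$-fixed cycle; and on a fixed Hamiltonian cycle $\tau$ can only act as a reflection of the odd cycle $C_v$ (a rotation of order $2$ is impossible for $v$ odd, and the identity would force $\tau=\mathrm{id}$), so each fixed cycle carries exactly one fixed vertex and exactly one fixed edge. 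Hence the number of fixed cycles equals the number of fixed edges, and since $\mathcal{H}$ has only $\frac{v-1}{2}$ cycles in total we obtain $\binom{f}{2}+\frac{v-f}{2}\le\frac{v-1}{2}$, which simplifies to $(f-1)^2\le 0$. Therefore $f=1$: every involution of $G$ fixes a unique vertex, fixes every cycle, and reflects each of them through that vertex.

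From here the dichotomy emerges. Two involutions fixing the same vertex would agree on every cycle — there being a unique reflection of an odd cycle through a prescribed vertex — hence be equal; so distinct involutions fix distinct vertices and $G$ has at most $v$ involutions. If $G$ has a single involution it is by definition binary, which is one of the allowed outcomes. The remaining case, $G$ possessing at least two involutions, is where the real work lies: a finer study of the products of these reflection-type involutions should show that the action of $G$ on the vertex set is doubly transitive, and then the classification of doubly transitive HCSs in \cite{BBM} forces $v=p$ prime and $G=AGL(1,p)$. Establishing this $2$-transitivity, and thereby excluding every even-order group that is neither binary nor affine, is the main obstacle of the forward direction.

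For the converse one must, for each admissible $G$, exhibit an HCS whose full automorphism group is \emph{exactly} $G$; the delicate point throughout is rigidity, that is, ruling out unwanted extra automorphisms rather than merely realizing $G$ as a subgroup. The odd-order groups are handled by the doubling construction of Section \ref{doubling}, applied in Section \ref{fullaut}. For the even-order admissible groups we assemble known and new ingredients: a solvable binary group other than $\Q_8$ is symmetrically sequenceable \cite{AI} and hence realizable \cite{BRT}; the group $\Q_8$ is realized by a direct construction given here; and $AGL(1,p)$ is realized by the doubly transitive HCS$(p)$ of \cite{BBM}. The only gap is the class of binary non-solvable groups, for which no construction is provided, accounting for the ``except possibly'' clause in the statement.
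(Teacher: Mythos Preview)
Your counting argument for $f=1$ is correct and is a pleasant alternative to the paper's approach, which instead picks a non-fixed vertex $x$, observes that $\tau$ fixes the edge $[x,\tau(x)]$ and hence the cycle $C$ containing it, and then reads off the unique fixed point as the vertex of $C$ opposite that edge. Both routes yield the same conclusion: every involution fixes exactly one vertex and reflects each cycle of $\mathcal H$ through it, so distinct involutions have distinct fixed points.

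The gap you explicitly flag---passing from ``at least two involutions'' to $AGL(1,p)$---is precisely where the substance lies, and the paper fills it by an argument different from the one you sketch. It does \emph{not} establish $2$-transitivity of $G$ directly. Instead, take involutions $\alpha,\beta$ with fixed points $a\ne b$ and set $S=\langle\alpha,\beta\rangle$; since $S$ fixes every cycle, restrict attention to the cycle $C'$ containing the edge $[a,b]$. On $C'$ the maps $\alpha,\beta$ are reflections through adjacent vertices, so $\beta\alpha$ is the rotation of $C'$ by two steps, which (as $v$ is odd) generates all rotations; hence $S$ acts on the vertex set as the full dihedral group of order $2v$. Now pick any odd prime $p\mid v$ and $\varphi\in S$ of order $p$: since $\varphi$ fixes every cycle, the edges $[\varphi^i(x),\varphi^{i+1}(x)]$ all lie in one Hamiltonian cycle, forcing that cycle to be $(x,\varphi(x),\ldots,\varphi^{p-1}(x))$ and hence $v=p$. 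With $v$ prime and a regular $\Z_p$-action preserving each cycle, $\mathcal H$ is forced to be the unique doubly transitive HCS$(p)$ of \cite{BBM}, whose full automorphism group is $AGL(1,p)$. So your ``main obstacle'' is genuine, and the missing idea is this dihedral-plus-primality step rather than a direct $2$-transitivity argument on $G$.

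Your account of the converse matches the paper, with one small correction: $\Q_8$ is not handled by an ad hoc construction but via Theorem~\ref{them3}, by first noting that $\Q_8\times\Z_d$ (for odd $d\ge 3$) is solvable binary and hence admits a $1$-rotational HCS, and then cutting the automorphism group down to $\Q_8$.
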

We therefore leave open the problem only for non--solvable binary groups.

\section{A new doubling construction}\label{doubling}
We describe a new doubling construction that will allow us to constuct an HCS($4n+1$) starting from three HCS($2n+1$) not necessarily distinct.\\

For any even integer $n \ge 1$, take three HCS($2n+1$), say $\HH_1, \HH_2, \HH_3$, on the set $\{\infty\} \ \cup  \ [2n]$, where $[2n]=\{1,2,\ldots,2n\}$. Denote by $A_i$, $B_i$, $C_i$, for $i\in [n]$, the cycles composing $\HH_1$, $\HH_2$, $\HH_3$, respectively, let 
\[A_i=(\infty, \alpha_{i,1}, \ldots, \alpha_{i,2n})\ \quad \
  B_i=(\infty, \beta_{i,1},  \ldots, \beta_{i,2n}) \ \quad \  
  C_i=(\infty, \gamma_{i,1},  \ldots, \gamma_{i,2n}). 
\]
We need these HCS to satisfy the following property:
\begin{equation}\label{property}
  \alpha_{i1} = \beta_{i1} = \gamma_{i1}, \quad \text{and} \quad  
  \alpha_{i,2n} = \beta_{i,2n} = \gamma_{i,2n}, \quad \text{for $i\in [n]$}.
\end{equation}
We construct an HCS($4n+1$) $\mathcal{T}$ on the set $\{\infty\} \ \cup\ 
([2n]\times \{1,-1\})$. 
For convenience, if $z=(x,y)\in [2n]\times \{1,-1\}$, then the point $(x,-y)$ will be denoted by 
$z'$. 

Let $\mathcal{T}=\{T_{i1}, T_{i2} \;|\; i \in [n]\}$ be the set of $2n$ cycles of length $4n+1$ and vertex-set
$\{\infty\} \ \cup\ ([2n]\times \{1,-1\})$
obtained from the cycles of $\HH_1, \HH_2$, $\HH_3$ as follows: 
set $a_{ij} = (\alpha_{ij},1)$, $b_{ij} = (\beta_{ij},1)$, 
and $c_{ij} = (\gamma_{ij},1)$, for $i\in [n]$ and $j \in [2n]$ and 
define the cycles $T_{i1}$, $T_{i2}$, of {\it first} and {\it second} type respectively, as follows:
\begin{align*}
T_{i,1} = &(\infty,a_{i,1}, a_{i,2}, \ldots, a_{i,2n}, b'_{i,2n}, b'_{i,2n-1},\ldots,b'_{i1}), \\
T_{i,2} = &(\infty, c_{i,2n},c'_{i,2n-1},c_{i,2n-2}, c'_{i,2n-3}, 
\ldots,c'_{i1}, c_{i,1}, c'_{i,2},c_{i,3},c'_{i,4},\ldots,c'_{i,2n}).
\end{align*}

\begin{rem}
  Note that, by construction, the neighbors of $\infty$ and the middle edge of any of the cycles of $\TT$ are both pairs of the form $(z,z')$ for $z\in [2n]\times \{-1,1\}$. Also,  if $T_1=(\infty, z, \ldots, w,w',\ldots, z')$ is a cycle of $\TT$, then there is a cycle 
  $T_2=(\infty, w,\ldots, z', z,\ldots, w')$ of $\TT$ of alternate type.
\end{rem}

We first show that our doubling construction yields a Hamiltonian cycle system.

\begin{lem}
\label{lem1}
$\mathcal{T}$ is an HCS($4n+1$).
\end{lem}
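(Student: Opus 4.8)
The plan is to verify the two defining requirements of an HCS$(4n+1)$ separately: that each $T_{i,1}$ and $T_{i,2}$ is a genuine Hamiltonian cycle on $\{\infty\}\cup([2n]\times\{1,-1\})$, and that the $2n$ cycles in $\TT$ have pairwise edge-disjoint edge-sets covering all of $E(K_{4n+1})$. Since there are $2n$ cycles, each of length $4n+1$, the total edge count is $2n(4n+1)=\binom{4n+1}{2}=|E(K_{4n+1})|$; hence it suffices to prove that every edge of $K_{4n+1}$ is used at least once, equivalently that none is used twice.

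For the Hamiltonicity of $T_{i,1}$ I would note that $\alpha_{i,1},\dots,\alpha_{i,2n}$ and $\beta_{i,1},\dots,\beta_{i,2n}$ are each permutations of $[2n]$ (because $A_i,B_i$ are Hamiltonian cycles of $K_{2n+1}$), so the $a_{i,j}$ exhaust the layer $[2n]\times\{1\}$ and the $b'_{i,j}$ exhaust $[2n]\times\{-1\}$, giving all $4n+1$ vertices once. For $T_{i,2}$ the argument is a short parity check: in the decreasing (first) block an index $j$ carries the sign $+1$ iff $j$ is even, while in the increasing (second) block it carries $+1$ iff $j$ is odd; consequently each $j\in[2n]$ contributes $c_{i,j}$ in exactly one block and $c'_{i,j}$ in the other, so every vertex of $[2n]\times\{-1,1\}$ appears exactly once.

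The core of the argument is the edge count, which I would organize by sorting $E(K_{4n+1})$ into five classes: the $\infty$-edges $\{\infty,(x,\epsilon)\}$; the \emph{positive} edges inside $[2n]\times\{1\}$; the \emph{negative} edges inside $[2n]\times\{-1\}$; the \emph{vertical} edges $\{(x,1),(x,-1)\}$ of the form $\{z,z'\}$; and the \emph{diagonal} edges $\{(x,1),(y,-1)\}$ with $x\neq y$. Reading off the edges of $T_{i,1}$, its internal positive edges are exactly the lifts to layer $+1$ of the non-$\infty$ edges of $A_i$, its internal negative edges are the lifts to layer $-1$ of the non-$\infty$ edges of $B_i$, its middle edge is the vertical edge at $\alpha_{i,2n}=\beta_{i,2n}$, and its two $\infty$-edges sit at $\alpha_{i,1}=\beta_{i,1}$. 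For $T_{i,2}$ the key computation is that each non-$\infty$ edge $\{\gamma_{i,j},\gamma_{i,j+1}\}$ of $C_i$ is lifted once in the first block and once in the second, and the alternating signs force these two lifts to be precisely the two diagonals $\{(\gamma_{i,j},1),(\gamma_{i,j+1},-1)\}$ and $\{(\gamma_{i,j},-1),(\gamma_{i,j+1},1)\}$; its middle edge is the vertical edge at $\gamma_{i,1}$ and its $\infty$-edges sit at $\gamma_{i,2n}$.

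It then remains to invoke the HCS hypotheses together with property \eqref{property}. Since $A_i$ (resp. $B_i$, $C_i$) range over an HCS$(2n+1)$, their non-$\infty$ edges partition the edge-set of $K_{2n}$ on $[2n]$; lifting these partitions to the layers covers the positive edges (via $A_i$), the negative edges (via $B_i$), and, taking both diagonals, the diagonal edges (via $C_i$), each exactly once. Finally, the HCS property forces the neighbour pairs $\{\alpha_{i,1},\alpha_{i,2n}\}_{i\in[n]}$ of $\infty$ to partition $[2n]$, and by \eqref{property} these coincide with $\{\gamma_{i,1},\gamma_{i,2n}\}$; this is exactly what is needed to see that the vertical edges (at $\alpha_{i,2n}$ from $T_{i,1}$ and at $\gamma_{i,1}$ from $T_{i,2}$) and the $\infty$-edges (at $\alpha_{i,1}$ from $T_{i,1}$ and at $\gamma_{i,2n}$ from $T_{i,2}$) are each hit exactly once. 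The main obstacle is the bookkeeping for $T_{i,2}$: one must track the alternating signs carefully enough to be certain the two blocks yield the two \emph{distinct} diagonals of each edge of $C_i$ (rather than repeating one), and that the interface edges meshing $T_{i,1}$ with $T_{i,2}$ are governed consistently by \eqref{property}.
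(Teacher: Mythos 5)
Your proposal is correct and follows essentially the same route as the paper's proof: you classify the edges of $K_{4n+1}$ into within-layer, diagonal, vertical and $\infty$-edges, cover the first two classes by the lifted non-$\infty$ edges of $\HH_1,\HH_2,\HH_3$, and handle the last two via Property \eqref{property} and the fact that the neighbours of $\infty$ in an HCS$(2n+1)$ partition $[2n]$. The only differences are cosmetic: you add an explicit edge count and a Hamiltonicity check that the paper leaves implicit.
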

\begin{proof}
Every unordered pair of form $(x_1,x_2)$ or $(\infty,x_1)$, $x_1,x_2\in [2n], 
x_1\ne x_2$ is contained in a unique cycle of each of $\mathcal{H}_1, $$\mathcal{H}_2, $$\mathcal{H}_3$. 
The first type of cycle above contains all pairs of form $((x_1,y),(x_2,y))$, $x_1, x_2 \in [2n]$, $x_1 \neq x_2$, $y=-1,1$, and the second cycle type contains all pairs of form $((x_1,1),(x_2,-1))$, $x_1, x_2 \in [2n]$, 
$x_1 \neq x_2$. We are left to show that $\TT$ contains the following edges: 
$(\infty, z)$ and $(z,z')$ for $z \in [2n]\times \{-1,1\}$.

In view of Property \eqref{property}, we have that $a_{ij}= b_{ij} = c_{ij}$ for $j=1,2n$. Therefore, the middle edges of the cycles in $\TT$ are exactly  $(a_{i1}, a'_{i1})$
and $(a_{i,2n},a'_{i,2n})$ for $i\in [n]$. Considering that $\{a_{i1}, a_{i,2n}\;|\;i \in [n]\}=[2n]\times \{1\}$, we conclude that $\mathcal{T}$ covers the edges  
$(z,z')$ for $z \in [2n]\times\{-1,1\}$.

Finally, the edges incident with $\infty$ and covered by $\TT$ are the following: 
$(\infty, a_{i1}),$ $(\infty, b'_{i1}), (\infty, c_{i,2n}), (\infty, c'_{i,2n})$, for 
$i\in [n]$. With a reasoning similar to the former we easily see 
that all edges $(\infty, z)$ with $z \in [2n]\times \{-1,1\}$ are covered by $\TT$. 
\end{proof}

\begin{ex} \label{ex1}
Here we show how to construct an HCS(13) by applying the doubling construction to three HCSs or order 7. 
Let $G=\langle g\rangle$ be the cyclic group of order $6$ generated by $g$, and let $\HH_1, \HH_2$, and $\HH_3$ denote
the three HCSs of order $7$ defined as follows: 
\begin{enumerate}
  \item $\HH_1 = \{A_1, A_2, A_{3}\}$ with
  $A_1=(\infty, 1, g, g^5, g^2, g^4, g^3)$ and $A_i=A_1\cdot g^{i-1}$ for $i=2,3$,
  \item $\HH_2 = \{B_1, B_2,B_3\}$ with
  $B_1=(\infty, 1, g^4, g^5, g^2, g, g^3)$ 
  and $B_i=B_1\cdot g^{i-1}$ for $i=2,3$,
  \item $\HH_3=\{C_1, C_2, C_3\} $ and $C_i = B_i$ for $i=1,2,3$ (hence, $\HH_3 = \HH_2$),
\end{enumerate}
where $A_1\cdot g^{i-1}$ ($B_1\cdot g^{i-1}$) is the cycle that we obtain by replacing each vertex of $A_1$ ($B_1$) different from $\infty$, say $x$, with $x\cdot g^{i-1}$. 
It is easy to check that $\HH_1$ and $\HH_2$ are HCS($7$); also, property  \ref{property}
is satisfied (see Figure \ref{img2}).

\begin{figure}[htbp]
  \centering
  \includegraphics[width=1.0\textwidth]{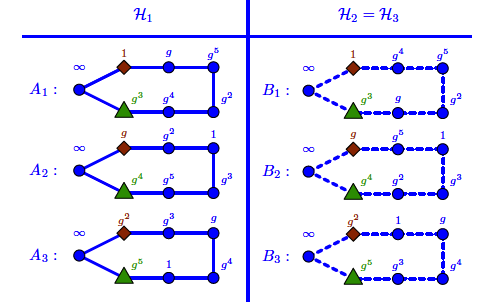}
  \caption{Two HCSs of order $7$.}
  \label{img2}
\end{figure}

The cycles $T_{i1}$ and $T_{i2}$ for $i=1,2,3$ are shown in Figure \ref{img3}.
Each cycle $T_{i1}$ is basically constructed from the paths that we obtain from $A_i$ (continous path) and $B_i$ (dashed path) after removing $\infty$, by joining two of their ends with $\infty$ and joining to each other the other two ends (zigzag edges). 
The construction of each cycle $T_{i2}$ is only based  on $C_i=B_i$. Consider two copies of the path we obtain from  $B_i$ (dashed path) after removing $\infty$ and replace the horizontal edges with the diagonal ones. At the end, we add the zigzag edges.

It is easy to check that the set $\mathcal{T}=\{T_{i1}, T_{i2} \;|\; i=1,2,3\}$ is an HCS($13$).

\begin{figure}[htbp]
 \centering
 \includegraphics[width=1.0\textwidth]{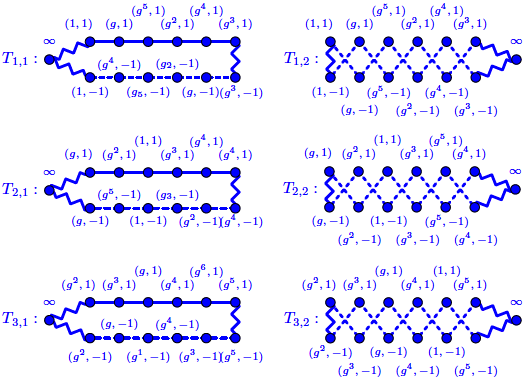}
 \caption{An HCS(13) resulting from the doubling condstruction.}
 \label{img3}
\end{figure}
\end{ex}

\begin{rem} This construction can be used to construct many different HCS from the same base systems by relabelling the vertices of one or more of $\HH_1, \HH_2, \HH_3$ in a way that Property \eqref{property} is still satisfied. 
\end{rem}

\begin{lem}
\label{lem2}
All automorphisms of $\mathcal{T}$ fix the $\infty$ point for $n>1$.
\end{lem}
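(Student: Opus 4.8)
The plan is to attach to every vertex an automorphism--invariant set of edges which, at $\infty$, recovers exactly the $2n$ ``middle'' symmetric pairs $\{z,z'\}$, and then to show that no other vertex produces such a configuration once $n>1$.

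First I would make the notion of a middle edge intrinsic. In a cycle of odd length $4n+1$, every vertex $v$ has exactly two vertices at maximal distance $2n$ from it, and those two vertices are adjacent; call the edge joining them the \emph{antipodal edge} of $v$ in that cycle, written $\mathrm{ant}_T(v)$. For each vertex $v$ set $A(v)=\{\mathrm{ant}_T(v) : T\in\TT\}$, a family of $2n$ edges, one per cycle. Since any automorphism $\phi$ maps each cycle isometrically onto another cycle, it preserves distances measured along cycles and hence antipodal edges; thus $\phi(A(v))=A(\phi(v))$, so $A$ is automorphism--equivariant. By construction the antipodal edge of $\infty$ in a cycle is precisely its middle edge, so from the proof of Lemma~\ref{lem1} together with Property~\eqref{property} we get $A(\infty)=\{\{z,z'\}:z\in[2n]\times\{1,-1\}\}$, the set of all $2n$ symmetric pairs. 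In particular $A(\infty)$ is a perfect matching on the $4n$ vertices other than $\infty$.

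It therefore suffices to prove that, for $n>1$, the point $\infty$ is the \emph{unique} vertex $v$ for which $A(v)$ is a perfect matching on the remaining $4n$ vertices. Equivalently, since the $2n$ antipodal edges of any $v$ have $4n$ endpoints in total, all different from $v$, being a perfect matching means no vertex occurs as an antipode of $v$ in two distinct cycles; so I must show that for every $v\ne\infty$ some vertex \emph{is} a repeated antipode. I would establish this by computing, for a generic vertex $v=(x,\pm1)$, its antipodes inside the two types of cycles $T_{i,1}$ and $T_{i,2}$ in which it lies: tracing the positions along each cycle shows that the antipodal edges of $v$ are forced to mix edges coming from $\HH_1,\HH_2,\HH_3$ and from both layers $\{1,-1\}$, and it is this lack of coherence that yields a coincidence of antipodes. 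Should the matching property alone fail to isolate $\infty$ for some exceptional base systems, I would instead compare $A(v)$ with the neighbour--matching $M_v$ (the unordered pairs of neighbours of $v$, taken over all cycles): for $\infty$ one has $A(\infty)=M_\infty$, whereas for $v\ne\infty$ the far pairs and the adjacent pairs cannot coincide, and the condition ``$A(v)=M_v$'' is again automorphism--invariant.

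The main obstacle is exactly this last, purely combinatorial step: verifying that the collision (or the inequality $A(v)\ne M_v$) holds for \emph{every} $v\ne\infty$ and for \emph{all} admissible triples $\HH_1,\HH_2,\HH_3$ satisfying Property~\eqref{property}, and not merely in typical cases, which requires a careful bookkeeping of antipodes across the first-- and second--type cycles. This is also where the hypothesis $n>1$ becomes essential: for $n=1$ the system $\TT$ is an HCS$(5)$, which is unique and vertex--transitive, so $\infty$ cannot be distinguished and the statement genuinely fails.
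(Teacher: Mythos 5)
Your setup is sound as far as it goes: the antipodal-edge family $A(v)$ is automorphism-equivariant, and $A(\infty)$ is indeed the perfect matching of symmetric pairs $\{z,z'\}$ by Lemma \ref{lem1} and Property \eqref{property}. But the proof has a genuine gap exactly where you acknowledge one: the claim that $\infty$ is the \emph{only} vertex $v$ for which $A(v)$ is a perfect matching (or for which $A(v)=M_v$) is never established, and this claim is the entire content of the lemma, not a deferrable verification. To carry it out you would have to trace, for an arbitrary $v=(x,\pm 1)$ and for \emph{every} admissible triple $\HH_1,\HH_2,\HH_3$ constrained only by \eqref{property}, the position of $v$ in all $2n$ cycles and exhibit a collision of antipodes; your sketch gives no mechanism that forces such a collision, and it is not evident that the statement even holds for all admissible inputs. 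As written, the argument reformulates the lemma as ``some invariant distinguishes $\infty$'' without proving that the chosen invariant does.

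The paper's proof avoids any such global classification of vertices. Assuming $\phi(\infty)=u\neq\infty$ and setting $v=\phi^{-1}(\infty)$, it examines just two cycles: the cycle $T_1$ containing the edge $(\infty,v)$, which by the Remark has the form $(\infty,v,\ldots,z,z',\ldots,v')$, and the alternate-type cycle $T_2=(\infty,z,\ldots,v',v,\ldots,z')$ whose middle edge is $(v,v')$. Applying $\phi$, the middle edge of $\phi(T_2)$ becomes $(u,\phi(z))$, so by the Remark there is a cycle whose neighbours of $\infty$ are exactly $u$ and $\phi(z)$; since the edge $(\infty,u)$ already lies in $\phi(T_1)$, that cycle must be $\phi(T_1)$, which forces $\phi(T_1)$ to be a $5$-cycle and hence $n=1$. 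The structural fact doing the work there is the pairing, across cycles of alternate type, between middle edges and pairs of neighbours of $\infty$; if you want to rescue your approach, that is the ingredient you would need to inject, since the bare matching property of $A(v)$ is unlikely to suffice on its own.
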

\begin{proof}
Suppose there is an automorphism $\phi$ of $T$ 
which does not fix the $\infty$ point, and let $u=\phi(\infty)$ and
$\phi^{-1}(\infty)=v$. Also denote $\phi(v')$ by $w$. The cycle $T_1$ containing $(\infty,v)$ as an edge has form: $(\infty, v,\ldots, z,z',\ldots,v')$.
This is mapped by $\phi$ to the cycle $\phi(T_1) = 
(u,\infty,\ldots, \phi(z),\phi(z'),\ldots,w)$. 

Now consider the cycle $T_2$ that is of alternate type to $T_1$ and that has the middle edge $(v,v')$, i.e. $T_2=(\infty,z,\ldots,v',v,\ldots,z')$. 
Then $T_2$ is mapped to the cycle 
$\phi(T_2) = (u,\phi(z),\ldots,w,\infty,\ldots,\phi(z'))$. 
The middle edge of $\phi(C_2)$ is $(u,\phi(z))$, hence there exists a cycle where the neighbors of $\infty$ are  $u$ and $\phi(z)$. Since $u$ is adjacent to $\infty$ in $\phi(T_1)$, then $\phi(C_1)=(u, \infty, \phi(z), \phi(z'), w)$ and 
$n=1$.
\end{proof}\

\section{HCSs with a prescribed full automorphism group}\label{fullaut}
In this section we prove that any group of odd order lies in the class $\mathcal{G}$ of finite groups than can be seen as the full automorphism group of an HCS of odd order. After that, we prove that whenever a group 
$G\in \mathcal{G}$ has even order then it is either binary or the affine linear group AGL($1,p$), with $p$ prime; also, we show that the quaternion group $\Q_8$ lies in 
$\mathcal{G}$.  As mentioned in the introduction, 
it is known that any binary solvable group $\neq \Q_8$ \cite{BRT}
and AGL($1,p$) ($p$ prime) \cite{BBM} 
lie in $\mathcal{G}$. Therefore, we leave open the problem of determining whether non--solvable binary groups lie in $\mathcal{G}$ as well.

In order to show that any group of odd order is the full automorphism group of a suitable HCS($4n+1$), we will need some preliminaries on $1$--rotational HCSs.

We will use multiplicative notation to denote any abstract 
group; as usual, the unit will be denoted by $1$. 

An HCS($2n+1$) $\HH$ is $1$-rotational over a group $\Gamma$ of order $2n$ if $\Gamma$ is an automorphism group of $\HH$ acting sharply transitively on all but one vertex. In this case, it is natural to identify the vertex-set with $\{\infty\}\ \cup \ \Gamma$ where $\infty$ is the vertex fixed by any $g \in \Gamma$ and view the action of $\Gamma$ on the vertex-set as the right multiplication where $\infty\cdot g=\infty$ for $g \in \Gamma$. 

It is known from \cite{BR} (as a special case of a more general result on $1$-rotational $2$-factorizations of the complete graph) that $G$ is binary, namely, it has only one element $\lambda$ of order $2$. As usual, we denote by 
$\Lambda(\Gamma)=\{1, \lambda\}$ the subgroup of $\Gamma$ of order $2$.

In the same paper, the authors also prove 
that the existence of a $1$-rotational HCS($2n+1$) $\HH$ is equivalent to  the existence of a cycle $A=(\infty, \alpha_1, \ldots, \alpha_{2n})$ with vertex-set $\{\infty\} \ \cup \ \Gamma$ such that
\begin{equation}\label{2starter}
\text{
$A\cdot \lambda= A$\;\;\; and \;\;\;
$\{\alpha_{i}\alpha_{i+1}^{-1}, 
\alpha_{i+1}\alpha_i^{-1} \;|\; i\in [n-1]\}=
\Gamma\setminus\{1,\lambda\}$,
}
\end{equation} 
In this case, $\alpha_{n+1-i}=\alpha_{i}\cdot \lambda$ and $\HH=\{A\cdot x\;|\; x \in X\}$, where $X$ is a complete system of representatives for the cosets of 
$\Lambda(\Gamma)$ in $\Gamma$. This means that $\HH$ is the set of distinct translates of any of its cycles.

We are now ready to prove the following result.

\begin{thm}
\label{them1}
Any group $G$ of odd order $n$ is the full automorphism group of a suitable 
HCS$(4n+1)$.
\end{thm}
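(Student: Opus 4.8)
The plan is to realize $G$ as the full automorphism group of the HCS($4n+1$) produced by the doubling construction of Section~\ref{doubling}, fed with three carefully chosen $1$-rotational base systems. First I would fix the group $\Gamma=G\times\Z_2$: since $|G|=n$ is odd, its Sylow $2$-subgroup is $\Z_2$, so by Burnside $\Gamma$ has a normal $2$-complement $G$, and as the unique involution $\lambda$ is central this is a direct product. Thus $\Gamma$ is (up to isomorphism) the only binary group of order $2n$ with $\Gamma/\Lambda(\Gamma)\cong G$, and $G$ is exactly its subgroup of odd-order elements. I would then build three $1$-rotational HCS($2n+1$) $\HH_1,\HH_2,\HH_3$ over $\Gamma$, each the set of $\Lambda(\Gamma)$-coset translates of a starter cycle satisfying \eqref{2starter}, indexing the cycles by the transversal $X=G$ and aligning the first spokes $\alpha_{i,1}=\beta_{i,1}=\gamma_{i,1}$; since $\alpha_{i,2n}=\alpha_{i,1}\lambda$ this forces the last spokes to agree as well, so property \eqref{property} holds. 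Applying the doubling construction then yields an HCS($4n+1$) $\TT$ by Lemma~\ref{lem1}.

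Next I would exhibit $G\le\operatorname{Aut}(\TT)$. For $g\in G$, right multiplication $R_g\colon(\gamma,\epsilon)\mapsto(\gamma g,\epsilon)$, $\infty\mapsto\infty$, permutes the base cycles while preserving their orientation, because $g$ has odd order and $X=G$ is closed under multiplication, so $R_g$ carries the $i$-th cycle to the $i'$-th one with $x_{i'}=x_ig$ and with matching index rather than reversed. Hence $R_g$ sends $T_{i,1}\mapsto T_{i',1}$ and $T_{i,2}\mapsto T_{i',2}$ and is an automorphism, giving an injection $G\hookrightarrow\operatorname{Aut}(\TT)$. By contrast $R_\lambda$ reverses each base cycle (as $\alpha_{i,j}\lambda=\alpha_{i,2n+1-j}$), turning a first-type cycle into a Hamiltonian cycle whose $\infty$-edges and middle edge differ from those of every cycle of $\TT$; so the involution is genuinely broken by the doubling and only the odd part $G$ survives among the translations.

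It then remains to prove $\operatorname{Aut}(\TT)=R_G$. By Lemma~\ref{lem2} every automorphism fixes $\infty$ once $n>1$ (the degenerate case $n=1$ being treated separately). I would next observe that, by the proof of Lemma~\ref{lem1}, the middle edges of $\TT$ are precisely the pairs $(z,z')$; therefore the involution $\iota\colon z\mapsto z'$ is intrinsic to $\TT$ and every automorphism commutes with it. Consequently any $\infty$-fixing automorphism $\phi$ either preserves or interchanges the two levels $[2n]\times\{1\}$ and $[2n]\times\{-1\}$, and it induces a permutation $\overline\phi$ of the base vertices fixing $\infty$. The horizontal edges on the two levels recover $\HH_1$ and $\HH_2$, while the diagonal edges of the second-type cycles recover $\HH_3$; choosing the three base systems pairwise incongruent—so that no relabelling takes one onto another and no level-swap identifies $\HH_1$ with $\HH_2$—forces $\phi$ to fix each level and to induce a common $\infty$-fixing automorphism of $\HH_1,\HH_2,\HH_3$. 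Such a base automorphism lies in $\Gamma$, and since it must preserve levels and commute with $\iota$ it cannot involve $\lambda$; hence $\overline\phi\in G$ and $\phi=R_g$ for the corresponding $g$, giving $\operatorname{Aut}(\TT)=R_G\cong G$.

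The main obstacle is this last step: the rigidity argument that no extra automorphism survives. Its success rests entirely on producing, for every odd $G$, three $1$-rotational systems over $G\times\Z_2$ that simultaneously satisfy \eqref{property} and are mutually rigid enough to kill the level-swap, the type-swap, and the residual $\lambda$-reflection. Guaranteeing even a single $1$-rotational HCS over $G\times\Z_2$ for every odd $G$ is already a nontrivial input—an appropriate starter must be constructed, exploiting the solvability of $G$—and arranging three of them in sufficiently general position, while keeping the automorphism group of each base system no larger than $\Gamma$, is where I expect the real work to lie.
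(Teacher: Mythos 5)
Your overall skeleton matches the paper's: work over $\Gamma=\Z_2\times G$, feed three aligned $1$-rotational HCS($2n+1$)'s into the doubling construction, use Lemma~\ref{lem2} to pin down $\infty$, and show that only the translations by $G$ survive. The existence input you worry about is fine and is exactly what the paper uses: $\Gamma$ is a solvable binary group (and never $\Q_8$, since its Sylow $2$-subgroup is normal of order $2$), so Anderson--Ihrig gives a $1$-rotational HCS($2n+1$) under $\Gamma$. But the heart of the proof --- producing base systems for which the rigidity argument actually closes --- is missing from your proposal, and you say so yourself. The paper does \emph{not} need three systems ``in general position'' with controlled automorphism groups; it takes a single starter $C=(\infty,x_1,\ldots,x_n,\ol{x_n},\ldots,\ol{x_1})$, builds a second starter $C^*$ by multiplying the segment $x_2,\ldots,x_k$ by $\lambda$ (with $k$ chosen so that condition \eqref{2starter} is preserved), and sets $\HH_1=\HH^*$, $\HH_2=\HH_3=\HH$. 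The only asymmetry needed is the single inequality $a_2=(\ol{x}_2,1)\neq(x_2,1)=b_2$, and the final argument is a short case analysis: after composing with a translation, an automorphism sends $b_1$ to one of the four $G$-orbit representatives $b_1,b'_1,b_{2n},b'_{2n}$, and each of the three bad cases is killed by tracing two specific cycles. No appeal to $\operatorname{Aut}(\HH)$ being small is ever made.

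Beyond the missing construction, two of your intermediate steps are not sound as stated. First, from the fact that every automorphism commutes with $\iota\colon z\mapsto z'$ you conclude that it preserves or swaps the two levels $[2n]\times\{1\}$ and $[2n]\times\{-1\}$; this does not follow --- commuting with $\iota$ only says the pairs $\{z,z'\}$ are permuted coherently, and a priori an automorphism could mix the levels vertex by vertex (indeed the second-type cycles themselves alternate between levels, so ``level'' is not visibly encoded in the cycle structure). Second, you assert that an induced automorphism of a base system ``lies in $\Gamma$''; there is no reason for $\operatorname{Aut}(\HH_i)$ to equal $\tau_\Gamma$, and arranging that would be a substantial extra burden. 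Finally, the case $n=1$ cannot be waved off: the trivial group requires an explicit rigid example (the paper exhibits an HCS($13$) with trivial automorphism group), since no doubling/rigidity argument applies there. In short: right strategy, but the decisive construction ($C\mapsto C^*$) and the concrete case analysis that replaces your ``general position'' assumption are absent, so the proof as proposed does not go through.
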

\begin{proof} Consider the set $\HH$ of the following $13$--cycles:
\begin{align*}
 & (\infty, 0_0, 1_0, 5_0, 2_0, 4_0, 3_0, 3_1, 1_1, 2_1, 5_1, 4_1, 0_1) \\
 & (\infty, 1_0, 2_0, 0_0, 3_0, 5_0, 4_0, 4_1, 2_1, 3_1, 0_1, 5_1, 1_1) \\
 & (\infty, 2_0, 3_0, 1_0, 4_0, 0_0, 5_0, 5_1, 3_1, 4_1, 1_1, 0_1, 2_1) \\  
 & (0_0, 4_1, 5_0, 2_1, 1_0, 3_1, \infty, 3_0, 1_1, 2_0, 5_1, 4_0, 0_1) \\
 & (1_0, 5_1, 0_0, 3_1, 2_0, 4_1, \infty, 4_0, 2_1, 3_0, 0_1, 5_0, 1_1) \\
 & (2_0, 0_1, 1_0, 4_1, 3_0, 5_1, \infty, 5_0, 3_1, 4_0, 1_1, 0_0, 2_1)    
\end{align*}
It is not difficult to check that $\HH$ is an HCS($13$) with no non-trivial automorphism.

Now, given a group $G$ of odd order $n\geq 3$, we show that there exists an 
HCS($4n+1$) whose full automorphism group is isomorphic to $G$. 
Let $\Z_2=\{1,\lambda\}$, 
and set $\Gamma = \Z_{2}\times G$. 
Since $\Gamma$ is a solvable binary group, it is known from \cite{AI} that there exists an HCS($2n+1$) $\HH$ with vertices $\{\infty\}\ \cup \ \Gamma$ that is $1$-rotational under $\Gamma$. 
Let $G=\{g_1=1, g_2, \ldots, g_n\}$ and for a given cycle 
$C=(\infty, x_1, \ldots, x_{2n})$ 
of $\HH$ set $C_i=C\cdot g_i$ for $i \in [n]$; therefore, 
$\HH=\{C_i\;|\;i\in [n]\}$. We have previously pointed out that $C$ satisfies 
\eqref{2starter}. Therefore, we can write $C=(\infty, x_1, \ldots, x_n, \ol{x_n}, \ldots, \ol{x_{1}})$, where $\ol{x_i}=x_i\cdot \lambda$. Also, there exists $k \in[n-1]\setminus\{1\}$ such that $x_{k}x_{k+1}^{-1}=x_1x_2^{-1}\lambda$ or 
$x_{k}x_{k+1}^{-1}=x_2x_1^{-1}\lambda$. 
We define the ($2n+1$)-cycle $C^*$ as follows:
\begin{align*}
  C^*=(\infty, &x_1, \ol{x}_2, \ldots, \ol{x}_k, x_{k+1}, \ldots, x_n,
  \ol{x}_n, \ldots, \ol{x}_{k+1}, x_k, \ldots, x_2, \ol{x}_1).
\end{align*}
Of course, $C^*\cdot\lambda=C^*$. Also, we have that
\begin{align*}
& \text{$\{x_1\ol{x}_2^{-1}, \ol{x}_2x_1^{-1}\} =
\{x_1x_2^{-1}\lambda, x_2x_1^{-1}\lambda\} = \{x_{k}x_{k+1}^{-1}, x_{k+1}x_{k}^{-1}\}$}\\
& \text{$\{\ol{x}_kx_{k+1}^{-1}, x_{k+1}\ol{x}_k^{-1}\} =
\{x_kx_{k+1}^{-1}\lambda, x_{k+1}x_k^{-1}\lambda\} = 
\{x_{1}x_{2}^{-1}, x_{2}x_{1}^{-1}\}$, and}\\
& \text{$\{\ol{x}_i\ol{x}_{i+1}^{-1} \;|\; i \in [n-1]\setminus\{1,k\}\} = 
\{x_ix_{i+1}^{-1} \;|\; i \in [n-1]\setminus\{1,k\}\}$.}
\end{align*}
Therefore, $C^*$, as well as $C$, satisfies both conditions in \eqref{2starter}.
It follows that $\HH^*=\{C_1^*, \ldots, C_n^*\}$, with $C_{i}^* = C^*\cdot g_i$, (namely, the $G$-orbit $\HH^*$ of $C^*$) 
is an HCS$(2n+1)$.

Now we apply the doubling construction defined above with $\HH_1=\HH^*$ and 
$\HH_{2}=\HH_3=\HH$ and let $\TT$ denote the resulting HCS$(4n+1)$ with vertex set $\{\infty\}\cup \Gamma \times \{1,-1\}$. Note that
Property \ref{property} is satisfied, as
the vertices adjacent with $\infty$ in $C$ and $C^*$ coincide.
The starter cycles $T_{1}$, $T_{2}$ of $\TT$ have the following form: 
\begin{align*}
T_{1} = (\infty, &a_{1}=b_{1}, a_{2},\ldots,a_{2n-1}, a_{2n}=b_{2n}, b'_{2n}, \ldots, b'_{1}),\\
T_{2} = (\infty, &b_{2n}, b'_{2n-1}, \ldots, b_{2}, b'_{1},
b_{1}, b'_{2}, \ldots, b_{2n-1}, b'_{2n}),
\end{align*}
where $b_{j}=(x_{j},1)$, $b'_{j}=(x_{j},-1)$, $a_{j}=(\alpha_{j},1)$, and $(\alpha_1, \ldots, \alpha_{2n})$ is the ordered sequence ($x_1, \ol{x}_2, \ol{x}_3, \ldots$) of the vertices of $C^*$. 

The cycles of $\TT$ are generated from $\TT_1$, $\TT_2$ by the action $(x,i)\mapsto (xg,i)$, $x\in \Gamma$, $i=1,-1$, $g\in G$.

For any $g\in G$, this action is an automorphism of $\TT$, and we denote the group of such automorphisms by $\tau_G$. 

We are going to show that $\tau_{G}=Aut(\TT)$. 
Note that $b_{2n}=b_{1}\cdot(\lambda,1)$ and that 
$b_{1}, b'_{1}, b_{2n},b'_{2n}$ form a complete system of representatives of the $G$-orbit on $\Gamma \times \{1,-1\}$. Therefore, given an automorphism 
$\varphi$ of $\TT$, then there exists $z \in G$ such that 
$\phi=\tau_{z}\varphi$ maps $b_{1}$ to one of $b_{1}, b'_{1}, b_{2n},b'_{2n}$.
It is enough to prove that it is always $b_{1}$. It will follow that 
$\phi$ is the identity, because, since the $\infty$ point is always fixed, all the vertices of $T_{1}$ are fixed. Hence, $\varphi=\tau_{z}^{-1} \in \tau_{G}$. Since $\tau_{G}$ is isomorphic to $G$ we get the assertion. 

Suppose that $\phi(b_{1})=b'_{1}$. It follows that the edge $(\infty, b'_{1})$ of $T_{1}$ is also an edge of $\phi(T_{1})$, therefore $T_{1}=\phi(T_{1})$.
In other words, $\phi$ is the reflection of $T_{1}$ in the axis through $\infty$. In particular, $\phi$  swaps $b_{1}$ and $b'_{1}$, and also $a_{2}$ and $b'_{2}$. This means that $\phi$ fixes the edge $(b_{1},b'_{1})$ and hence it fixes the cycle $T_{2}$ containing this edge. Therefore, $\phi$ is the reflection of $T_{2}$ in the axis through the edge $(b_{1},b'_{1})$. It follows that $\phi$  swaps $b_{2}$ and $b'_{2}$, but this contradicts the previous conclusion as $b_{2}=(x_2,1) \neq (\ol{x}_2,1)=a_{2}$.

Now suppose that $\phi(b_{1})=b_{2n}$. Then $\phi$ maps $T_{1}$ to $T_{2}$. Therefore, $\phi(a_{2n-1})=b_{2}=(x_{2},1)= a_{2n-1}$, that is,  
$\phi$ fixes $a_{2n-1}$. However this
implies that $\phi$ is the identity, since then the edge $(\infty,a_{2n-1})$ and all the points in the cycle that contains it are also fixed. 
A similar argument applies to the possibility that $\phi(b_{1})=b'_{2n}$. 
Thus $\phi$ fixes $b_{1}$.
\end{proof}

We point out to the reader that the HCS($13$) $\mathcal{T}$ of Example \ref{ex1} has been constructed following the proof of Theorem \ref{them1}. 
In fact, $\HH_1$ and $\HH_2$ are two $1$-rotational HCS($7$) and if we set $C=B_1$, then $C^*=A_1$. It then follows that $Aut(\mathcal{T}) = \Z_3$. 

We finally consider the case where a group $G \in \mathcal{G}$ has even order and prove the following:

\begin{thm} \label{them2}
If $\mathcal{H}$ is an HCS($2n+1$) whose full isomorphism group has even order, then either $Aut(\mathcal{H})$ is binary or $2n+1$ is a prime and  $Aut(\mathcal{H})$ is the affine linear group AGL($1,2n+1$).
\end{thm}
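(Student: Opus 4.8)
The plan is to start from an involution $\sigma \in Aut(\mathcal{H})$, which exists by Cauchy's theorem since $|Aut(\mathcal{H})|$ is even, and to pin down completely how any involution must act. First I would show that $\sigma$ fixes exactly one vertex: as $2n+1$ is odd, $\sigma$ fixes an odd number ($\ge 1$) of vertices, and if it fixed two distinct vertices $u,v$ then the unique cycle $C$ through the edge $\{u,v\}$ would satisfy $\sigma(C)=C$, forcing $\sigma|_C$ to be a nontrivial automorphism of the $(2n+1)$-cycle $C$ fixing two vertices --- impossible, since a reflection of an odd cycle fixes a single vertex. Writing $\infty$ for this unique fixed vertex, a counting refinement then shows $\sigma$ fixes \emph{every} cycle: $\sigma$ fixes exactly $n$ edges (the pairs $\{x,\sigma(x)\}$, $x\neq\infty$), each such edge lies in a $\sigma$-invariant cycle, and on an invariant cycle $\sigma$ acts as the reflection through $\infty$ and so fixes exactly one edge; with only $n$ cycles available, all $n$ must be $\sigma$-invariant with $\sigma$ acting as the reflection through $\infty$ on each. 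In particular $\sigma$ is completely determined by $\infty$, so distinct involutions have distinct fixed vertices.

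If $Aut(\mathcal{H})$ has a unique involution it is binary and we are in the first alternative. So assume there are two involutions $\sigma_u,\sigma_v$ with $u\neq v$, and set $\rho=\sigma_u\sigma_v$. Evaluating $\rho$ on the cycle $C_0$ through the edge $\{u,v\}$, where $u$ and $v$ are adjacent, the product of the two reflections is a rotation by $\pm 2$ of the $(2n+1)$-gon $C_0$; since $\gcd(2,2n+1)=1$ and $C_0$ is Hamiltonian, $\rho$ is a single $(2n+1)$-cycle on the whole vertex set. Hence $R:=\langle\rho\rangle\cong\Z_{2n+1}$ is a regular cyclic subgroup, $Aut(\mathcal{H})$ is transitive, and $\sigma_u\rho\sigma_u=\rho^{-1}$.

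Next I would show $R$ fixes every cycle. For any cycle $C$ and any $j$, the fact that $\sigma_u$ fixes every cycle gives $\sigma_u(\rho^j C)=\rho^{-j}(\sigma_u C)=\rho^{-j}(C)$, while $\rho^j C$ being a cycle of $\mathcal{H}$ forces $\sigma_u(\rho^j C)=\rho^j C$; hence $\rho^{2j}(C)=C$ for all $j$, and since $2n+1$ is odd this yields $\rho^i(C)=C$ for all $i$. Identifying the vertices with $\Z_{2n+1}$ so that $\rho\colon x\mapsto x+1$, each cycle is $R$-invariant, and having $2n+1$ edges it must be a single $R$-orbit of edges, i.e.\ a full difference class $\{\{x,x+d\}:x\}$; for this to be one Hamiltonian cycle we need $\gcd(d,2n+1)=1$. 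As the $n$ cycles realize all difference classes $d=1,\dots,n$, every such $d$ is coprime to $2n+1$, which forces $2n+1=p$ to be prime. Thus $\mathcal{H}$ is the standard cyclic HCS over $\Z_p$, the multiplications $x\mapsto ax$ ($a\in\Z_p^*$) permute its cycles and so lie in $Aut(\mathcal{H})$, giving $AGL(1,p)\le Aut(\mathcal{H})$; in particular $Aut(\mathcal{H})$ is doubly transitive, and by the classification of doubly transitive HCS in \cite{BBM} we conclude $2n+1=p$ is prime and $Aut(\mathcal{H})=AGL(1,p)$.

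I expect the main obstacle to be the middle passage: extracting the regular cyclic subgroup $R$ from two involutions via the rotation-by-$2$ observation at the edge $\{u,v\}$, and then using ``every involution fixes every cycle'' to force $R$ to stabilize every cycle. Once $\mathcal{H}$ is recognized as the difference-class (standard cyclic) system, primality of $2n+1$ and the inclusion $AGL(1,p)\le Aut(\mathcal{H})$ are comparatively routine, and the doubly transitive classification of \cite{BBM} closes the argument.
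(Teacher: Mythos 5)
Your proof is correct and follows essentially the same route as the paper: involutions fix exactly one vertex and every cycle of $\mathcal{H}$, two distinct involutions compose to a step-$2$ rotation generating a cyclic group of order $2n+1$ that stabilizes every cycle, this forces $2n+1$ to be prime, and the doubly transitive classification of \cite{BBM} finishes the identification of $Aut(\mathcal{H})$ as AGL($1,2n+1$). The only cosmetic difference is that you derive primality by recognizing each cycle as a difference class of the regular cyclic subgroup, whereas the paper applies an element of prime order directly to a single edge orbit; both arguments are sound.
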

\begin{proof} Let $\mathcal{H}$ be an HCS($2n+1$) and assume that $Aut(\mathcal{H})$ has even order.
We first show that any involution of $Aut(\mathcal{H})$ has exactly one fixed point and fixes each cycle of $\mathcal{H}$. This means that an involution is uniquely determined once the point it fixes is known. Therefore, distinct involutory automorphisms of $\mathcal{H}$ have distinct fixed points.

Suppose $\alpha$ is an involutory automorphism of $\mathcal{H}$ and let $x$ be any point not fixed by $\alpha$. Then the edge $[x,\alpha(x)]$ occurs in some cycle $C$, and $\alpha$ fixes this edge. It then follows that the entire cycle $C$ is fixed by $\alpha$ which therefore acts on $C$ as the reflection in the axis of $[x, \alpha(x)]$. The point $a$ opposite to this edge is then the only fixed point of $\alpha$. Note that there are $n$ edges of the form $[x, \alpha(x)]$ with $x\neq a$, and that they are partitioned among the $n$ cycles of $\mathcal{H}$. Therefore, reasoning as before, we get that $\alpha$ fixes all cycles of $\mathcal{H}$

Now, assume that $Aut(\mathcal{H})$ is not binary and let $\beta$ be a second involutory automorphism with fixed point $b$. We denote by $S$ the subgroup of $Aut(\mathcal{H})$ generated by $\alpha$ and $\beta$. Since both involutions fix all cycles of $\mathcal{H}$, all automorphisms in $S$ fix them.
If $C'$ is the cycle of $\mathcal{H}$ containing the edge $[a,b]$, then the map $\beta\alpha$ is the rotation of $C'$ with step $2$. Since $C'$ has odd order, this means that $S$ also contains all rotations of $C'$. Hence, $S$ is the dihedral group of order $4n+2$.

We also have that $2n+1$ is prime. In fact, let $\varphi\in S$ be an automorphism of prime order $p$; given a point $x$ not fixed by $\varphi$ we denote by $C''$ the cycle of $\mathcal{H}$ containing $[x,\varphi(x)]$.
Of course, all edges of the form $[\varphi^{i}(x), \varphi^{i+1}(x)]$ with $i=0,\ldots,p-1$ lies in $C''$.
It follows that $C''$ is the cycle $(x, \varphi(x),$ $\varphi^2(x), \ldots, \varphi^{p-2}(x), \varphi^{p-1}(x))$, hence $2n+1=p$.

One can easily see that $\mathcal{H}$ is then the unique $2$-transitive HCS($p$) whose full automorphism group is AGL($1,p$) \cite{BBM} and this completes the proof.
\end{proof}

The following theorem provides a sufficient condition for a binary group $H$ of order $4m$ to be the full automorphism group of infinitely many HCSs of odd order.

\begin{thm}\label{them3} Let $H$ be a binary group of order $4m$ and let $d\geq3$ be an odd integer.
If there exists a $1$-rotational HCS($4md+1$) 
under $H \times \Z_d$, then there exists an HCS($4md+1$) whose full automorphism group is $H$.
\end{thm}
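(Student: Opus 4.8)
The plan is to produce the required system by modifying the given $1$-rotational HCS so as to destroy its $\Z_d$-symmetry while keeping the action of $H$. Write $\Gamma=H\times\Z_d$ and let $\lambda=(\lambda_H,0)$ be its unique involution, where $\lambda_H$ is the (necessarily central) involution of $H$. Realise the given HCS$(4md+1)$ as the $\Gamma$-orbit $\HH=\{A\cdot x\mid x\in X\}$ of a starter cycle $A$ satisfying \eqref{2starter}, with $X$ a transversal of $\{1,\lambda\}$ in $\Gamma$. Since the stabiliser of $A$ in $H=H\times\{0\}$ is $\{1,\lambda_H\}$, the subgroup $H$ splits $\HH$ into $d$ orbits $\mathcal{O}_0,\dots,\mathcal{O}_{d-1}$ of $2m$ cycles each, where $\mathcal{O}_j$ is the $H$-orbit of $A_j:=A\cdot(1,j)$; these are the $d$ ``layers'' $H\times\{j\}$ on which $H$ acts sharply transitively (together with the fixed point $\infty$), and the generator of $\Z_d$ permutes them cyclically.

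First I would break the $\Z_d$-symmetry by a layer-wise twist of the kind used in the proof of Theorem~\ref{them1}. Applying the rebracketing $A_j\rightsquigarrow A_j^{*}$ of the palindrome $A_j$ at a suitable position $k$ interchanges the roles of a pair of consecutive differences $\delta$ and $\delta\lambda$ while leaving the whole difference multiset unchanged; because $\lambda$ is central and has trivial $\Z_d$-component, every difference keeps its $\Z_d$-component, and both $A_j^{*}\cdot\lambda=A_j^{*}$ and \eqref{2starter} are preserved exactly as in Theorem~\ref{them1}. I would then form $\TT$ by replacing $\mathcal{O}_0$ with the $H$-orbit $\mathcal{O}_0^{*}$ of $A_0^{*}$ and leaving the other layers untouched. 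The point to verify is that the twist is \emph{layer-internal}, i.e.\ that $\mathcal{O}_0^{*}$ covers exactly the same $H$-orbits of edges as $\mathcal{O}_0$; this reduces to checking that the two edges created by the twist lie in the same $H$-orbits as the two it deletes, which amounts to choosing $k$ so that the relevant endpoints sit in matching $\Z_d$-layers. Granting such a $k$, the total edge set is unchanged and $\TT$ is again an HCS$(4md+1)$ carrying the action of $H$; moreover the $\Z_d$-generator no longer preserves $\TT$, since it carries the twisted orbit $\mathcal{O}_0^{*}$ to a twisted orbit in layer $1$, whereas $\TT$ contains there the untwisted $\mathcal{O}_1$.

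It remains to prove $\mathrm{Aut}(\TT)=H$. As $H\le\mathrm{Aut}(\TT)$ has even order, Theorem~\ref{them2} forces $\mathrm{Aut}(\TT)$ to be either binary or $\mathrm{AGL}(1,4md+1)$. The latter is excluded: a $p'$-subgroup of $\mathrm{AGL}(1,p)$ fixes a point and so is cyclic, whence a non-cyclic binary $H$ cannot embed at all, while for cyclic $H$ the exceptional system of \cite{BBM} would admit automorphisms nontrivially permuting the $H$-orbits, precisely what the twist prevents. Hence $\mathrm{Aut}(\TT)$ is binary, its unique involution is the central element $\lambda_H$, and therefore every automorphism commutes with $\lambda_H$ and fixes its unique fixed point $\infty$. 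Composing with a suitable element of $H$ (sharply transitive on each layer), I would reduce to an automorphism $\phi$ fixing $\infty$ together with a chosen vertex of the twisted layer, and then argue, as in Lemma~\ref{lem2} and the final part of Theorem~\ref{them1}, that the rigidity of the twisted cycle forces $\phi$ to fix a whole cycle and hence to be the identity; this gives $\mathrm{Aut}(\TT)=H$.

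The main obstacle is the bookkeeping in the middle step: the twist must be chosen simultaneously so as (i) to keep $\TT$ an HCS, i.e.\ each modified base cycle stays a single Hamiltonian cycle covering precisely its old $H$-edge-orbits (the layer-matching condition on $k$), and (ii) to make the twisted layer canonically distinguishable, so that no automorphism outside $H$, in particular none permuting the layers, can survive. Proving that a position $k$ with the matching property always exists for a starter arising from a $1$-rotational HCS over $H\times\Z_d$, and reconciling it with the rigidity requirement, is where the real work lies; the concluding fixed-point computation is then a routine adaptation of the arguments already used for Theorem~\ref{them1}.
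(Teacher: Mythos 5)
Your overall strategy -- keep the $H$-orbit decomposition $\HH=\bigcup_{i}Orb_H(A_i)$, perturb the orbit of $A_0$ so that the edge set is unchanged but the translations by $G\setminus H$ are destroyed, then pin down $\mathrm{Aut}$ via Theorem \ref{them2} -- is exactly the paper's. But the step you yourself flag as ``where the real work lies'' is a genuine gap, and the rebracketing of Theorem \ref{them1} is the wrong tool for it. That rebracketing only preserves the \emph{multiset of differences} of the starter, which suffices when you re-form the system as the orbit under the full group acting sharply transitively on $\Gamma$; here you must preserve the $H$-orbits of \emph{edges}, and the $H$-orbit of an edge $[a,\delta a]$ is determined not just by the difference $\delta$ but also by the coset of $H$ containing the endpoint. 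So your position $k$ must satisfy a difference condition \emph{and} a coset-matching condition simultaneously, and nothing in the $1$-rotational hypothesis guarantees such a $k$ exists. You do not supply the argument, and it is not clear one exists along these lines.

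The paper's modification is different and is the key idea you are missing: since $H$ is binary of order $4m$, it contains an element $x$ of order $4$, and $x^2=\lambda$. Choosing $j$ with $a_{j+1}a_j^{-1}=x$ and replacing the edge pair $\EE=\{[a_j,a_{j+1}],[a_j\lambda,a_{j+1}\lambda]\}$ of $A_0$ by $\EE^*=\{[a_j,a_{j+1}\lambda],[a_j\lambda,a_{j+1}]\}$ gives $\EE^*=\EE\cdot y$ with $y=a_j^{-1}xa_j\in H$ (the computation $[a_j,a_{j+1}]y=[xa_j,x^2a_j]=[a_{j+1},a_j\lambda]$ uses $x^2=\lambda$ and the centrality of $\lambda$). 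Hence $Orb_H(\EE)=Orb_H(\EE^*)$ exactly, with no condition to verify on positions or layers, and $A_0^*$ is still a Hamiltonian cycle. Your concluding steps are also thinner than they should be: the exclusion of $\mathrm{AGL}(1,4md+1)$ is done in the paper by the clean count $|\AAA|=|\AAA_\infty|\,|Orb_{\AAA}(\infty)|\le 4m(4md+1)<4md(4md+1)$ (using $d\ge 3$), rather than by your ad hoc embedding argument, and the identification $\AAA_\infty=\tau_H$ uses the inequality $|Orb_H(A_0^*)|<|\HH^*\setminus Orb_H(A_0^*)|$ to find an unmodified cycle mapped to an unmodified cycle, forcing any $\varphi\in\AAA_\infty$ to be a translation. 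As written, your proposal does not constitute a proof without the missing existence argument for $k$.
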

\begin{proof} Let $\Z_d =\langle z\rangle$ denote the cyclic group of order $d$ generated by $z$ and let $\lambda$ be the unique element of order $2$ in $H$. 
It is straightforward that $G = H \times \Z_d$ is a binary group and its element of order $2$ is $\lambda$. 

Now, let  $\HH$  be a $1$--rotational HCS($4md+1$) under $G$, and let 
$A=(\infty, a_1,$ $a_2, \ldots, a_{2md},  
a_{2md}\lambda,\ldots, a_2\lambda, a_1\lambda)$ 
denote its starter cycle. Also, let 
$Orb_{\Z_d}(A)=\{A_0, A_1, \ldots, A_{d-1}\}$ be the $\Z_d$--orbit of $A$
where $A_i = A \cdot z^i$ for $i=0,1,\ldots,d-1$. We can then see $\HH$ as the union of the 
$H$-orbits of each cycle in $Orb_{\Z_d}(A)$,
that is, 
$\HH= \cup_{i=0}^{d-1} (Orb_{H}(A_i))$. 

We now construct a new HCS($4md+1$)
$\HH^*$ through a slight modification of the cycle $A_0$: 
\begin{enumerate}
  \item  
  Since the binary group $H$ has order $4m$, there is at least an element $x\in H$ of order $4$. 
  Also $\Delta A$ covers all non-zero elements of $H \times \Z_d$. It follows that
  there exists $j\in [2md-1]$ such that $x=a_{j+1}a_{j}^{-1}$. Now, let
  $A_0^*$ denote the graph we get from $A_0$ by replacing the edges in 
  $\EE=\{[a_j,a_{j+1}], [a_{j}\lambda, a_{j+1}\lambda]\}$ with those in
  $\EE^*=\{[a_j,a_{j+1}\lambda], [a_j\lambda, a_{j+1}]\}$.
  \item Set $\HH^* =  \cup_{i=1}^{d-1} Orb_H(A_i) \ \cup \ Orb_H(A_0^*)$.
\end{enumerate}
It is easy to see that $A_0^*$ is a ($4md+1$)--cycle. Also, $E(A_0)\setminus \EE = E(A^*_0)\setminus \EE^*$. We show that $\EE^* = \EE \cdot y$, 
where $y=a_j^{-1}xa_j$. We first point out that $x^2=\lambda$, since $x^2$ has order $2$ and $H$ is binary. Also, note that $a_{j+1} = xa_{j}$ and recall that $\lambda$ commutes with every element in $G$. Therefore,
\begin{align*}
& [a_j, a_{j+1}]y= [xa_{j}, x^2a_j] = [a_{j+1}, a_j\lambda], \;\;\; \text{and} \\
& [a_j\lambda, a_{j+1}\lambda]y= [a_j, a_{j+1}]y\lambda = [a_{j+1}, a_j\lambda]\lambda = 
[a_{j+1}\lambda, a_j]. 
\end{align*}
Since $y \in H$, we have that $Orb_{H}(\EE) = Orb_{H}(\EE^*)$. It follows that $Orb_H(A_0)$ and 
$Orb_H(A_0^*)$ cover the same set of edges. 
Since $\HH\setminus Orb_H(A_0) = \HH^*\setminus Orb_H(A^*_0)$, we have that $\HH^*$ covers the same set of edges covered by $\HH$, that is, $\HH^*$ is an HCS($4md+1$).  
  
We now show that the full automorphism groups of $\HH^*$ is isomorphic to $H$.  
We set $\AAA = Aut(\HH^*)$ and denote by $\AAA_{\infty}$ the $\AAA$-stabilizer of $\infty$.
Also, let $\tau_g$ denote the translation by the element $g\in G$, that is, the permutation on 
$G \ \cup \ \{\infty\}$ fixing $\infty$ and mapping $x \in G$ to $xg$ for any $x,g\in G$.
Finally, let $\tau_G$ and $\tau_H$ denote the group of all translations by the elements of $G$ and $H$, respectively. It is easy to check that, by construction, $\tau_H$ is an automorphism group of $\HH^*$ fixing $\infty$, that is, $\tau_{H} \subseteq \AAA_{\infty}$; on the other hand, the replacement of $Orb_H(A_0)$ with $Orb_H(A^*_0)$ ensures that $\tau_g$ is not an automorphism of $\HH^*$ whenever $g \in G\setminus H$. In other words, for any $g\in G$ we have that
\begin{equation}\label{translations}
  \tau_{g} \in \AAA \;\;\; \text{if and only if} \;\;\; g \in H.
\end{equation}

We are going to show that 
$\AAA_{\infty}=\tau_H$. Let $\varphi \in \AAA_{\infty}$ and note that 
$|Orb_H(A_0^*)|<|\HH^*\setminus Orb_H(A_0^*)|$. Therefore, there exists a cycle 
$C=(\infty, g_1, g_2, \ldots, g_{4mp})\in \HH^*\setminus Orb_H(A_0^*)$ such that 
$\varphi(C)=\HH^*\setminus Orb_H(A_0^*)$. Since $\varphi$ fixes $\infty$, then
$\varphi(C) = (\infty, \varphi(g_1),\varphi(g_2), \ldots, \varphi(g_{4mp}))$. 
Note that $\HH^*\setminus Orb_H(A_0^*) \subseteq \HH$, 
therefore $\varphi(C)$ is a translate of $C$, namely, there exists 
$x\in G$ such that $\varphi(g_i) = g_i\cdot x$. This means that $\varphi = \tau_x$, and in view of \eqref{translations}, $x \in H$. It follows that $\varphi \in \tau_H$, hence 
$\AAA_{\infty} = \tau_{H}$.
  
 Since $\tau_H$ is isomorphic to $H$,  we are left to show $\AAA=\AAA_{\infty}$.
 Assume that there exists $w \in Orb_{\AAA}(\infty) \setminus \{\infty\}$. From a classic result on permutations groups, we have that $\AAA_{w}$ and $\AAA_{\infty}$ are conjugate hence, in particular, they are isomorphic. This means that $\AAA_w$ contains an involution $\psi$.
  Since $\AAA_{\infty} \ \cap \ \AAA_{w} = \{id\}$ (otherwise there would be a non-trivial automorphism fixing two vertices), we have that $\tau_{\lambda}$ and $\psi$ are distinct. Therefore, in view of Theorem \ref{them2}, we have that $4md+1$ is a prime and $\AAA = AGL(1,4md+1)$; in particular, $|\AAA| = 4md(4md+1)$. 
   But this leads to a contradiction since 
  $|\AAA| = |\AAA_{\infty}||Orb_{\AAA}(\infty)|$ and $|Orb_{\AAA}(\infty)|\leq 4md+1$, that is, $|\AAA| \leq 4m(4md+1)$ where $d \geq 3$. 
  Therefore, $Orb_{\AAA}(\infty) = \{\infty\}$, namely,  $\AAA=\AAA_{\infty}$.
\end{proof}

As a consequence we obtain the following: 

\begin{thm} \label{cor}
The quaternion group $\Q_8$ is the full automorphism group of an HCS($8d+1$), for any $d \geq 3$.
\end{thm}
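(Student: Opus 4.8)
The plan is to treat the two parities of $d$ separately: the odd case follows from Theorem~\ref{them3}, while the even case is reduced to smaller cases through the doubling construction of Section~\ref{doubling}. For odd $d\geq 3$ I would apply Theorem~\ref{them3} with $H=\Q_8$; since $|\Q_8|=8=4\cdot 2$ we have $m=2$ and $4md+1=8d+1$, so its conclusion is precisely an HCS$(8d+1)$ with full automorphism group $\Q_8$, once the hypothesis is verified. To verify it I would observe that $\Q_8\times\Z_d$ is solvable and binary (for odd $d$ its only involution is the central involution of $\Q_8$) and is different from $\Q_8$ because $d\geq 3$; hence by \cite{AI} it is symmetrically sequenceable, and a symmetric sequencing provides a starter cycle satisfying \eqref{2starter}, that is, a $1$-rotational HCS$(8d+1)$ under $\Q_8\times\Z_d$. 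Theorem~\ref{them3} then delivers the required system.

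The even case is the heart of the matter. Here Theorem~\ref{them3} is unavailable: for even $d$ the group $\Q_8\times\Z_d$ is not binary, and in fact no binary group of order $8d$ can have $\Q_8$ as a direct factor, since a complement of order $d$ would contribute an involution distinct from the central one of $\Q_8$. I would therefore abandon the group-theoretic reduction and instead use the doubling construction of Section~\ref{doubling}, which sends an HCS$(2n+1)$ to an HCS$(4n+1)$ and so realizes the recursion $d\mapsto 2d$ on the parameter of our systems. Its key feature for us is Lemma~\ref{lem2}: whenever $n>1$, every automorphism of the doubled system $\TT$ fixes $\infty$.

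The crucial step I would establish is a doubling lemma: for a suitable choice of the three input systems, if an HCS$(4d+1)$ has full automorphism group $\Q_8$ then the doubled HCS$(8d+1)$ also has full automorphism group $\Q_8$. To obtain the $\Q_8$-action on the doubled vertex set $\{\infty\}\cup([2n]\times\{1,-1\})$ I would let $\Q_8$ act through its action on the first coordinate and check that this preserves the cycles $T_{i,1}$ and $T_{i,2}$; to rule out any further automorphism I would, in the spirit of the proof of Theorem~\ref{them1}, break the symmetry among $\HH_1,\HH_2,\HH_3$ and analyse, with $\infty$ already fixed by Lemma~\ref{lem2}, how an automorphism must permute the first- and second-type cycles. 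Proving that this pins the full group down to exactly $\Q_8$, with no spurious automorphism, is where I expect the main difficulty to lie.

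With the doubling lemma in hand I would finish by induction on $d$. The odd values $d\geq 3$ are base cases settled by Theorem~\ref{them3}. For even $d\geq 6$ the smaller parameter $d/2\geq 3$ is already known, so doubling an HCS$(4d+1)$ with full group $\Q_8$ (here $n=2d>1$, so Lemma~\ref{lem2} applies) produces the desired HCS$(8d+1)$. The only value not reached this way is $d=4$, for which $d/2=2<3$; this I would treat as an additional base case by exhibiting an explicit HCS$(33)$ with full automorphism group $\Q_8$ and checking it directly, exactly as the explicit HCS$(13)$ is handled in the proof of Theorem~\ref{them1}. Thus the main obstacle is the doubling lemma; everything else is either a direct appeal to Theorem~\ref{them3} or a finite verification.
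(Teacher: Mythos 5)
For odd $d\geq 3$ your argument coincides exactly with the paper's proof: $\Q_8\times\Z_d$ is a binary solvable group different from $\Q_8$, hence by \cite{AI} it is symmetrically sequenceable and therefore acts $1$-rotationally on an HCS$(8d+1)$, so Theorem \ref{them3} (with $m=2$) applies. That half of your proposal is complete and correct, and your verification of the hypothesis is actually spelled out more carefully than in the paper.

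The even case, however, is a genuine gap, and you say so yourself: the ``doubling lemma'' you need is precisely the hard step, and it is nowhere proved. Lemma \ref{lem2} only forces automorphisms of the doubled system to fix $\infty$; it gives no control whatsoever on the stabilizer of $\infty$, and passing from ``$\infty$ is fixed'' to ``the full group is exactly $\Q_8$'' is the entire content of such a lemma. The one model available, the proof of Theorem \ref{them1}, shows how much special structure this requires: the inputs there are not arbitrary systems with the right automorphism group, but the $G$-orbits $\HH$ and $\HH^*$ of a $1$-rotational starter $C$ and of its twisted companion $C^*$, and the elimination of spurious automorphisms leans on that $1$-rotational structure; moreover the group that survives is of odd order, and the natural adaptation to $\Q_8$ breaks down because $\Z_2\times\Q_8$ is not binary and so admits no $1$-rotational HCS. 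The base case HCS$(33)$ is likewise only promised, not exhibited. To your credit, your diagnosis that Theorem \ref{them3} cannot reach even $d$ is correct ($\Q_8\times\Z_d$ is binary only for odd $d$, and the hypothesis of Theorem \ref{them3} requires $d$ odd), and in fact the paper's own one-line proof has the same defect: it asserts that $\Q_8\times\Z_d$ is binary solvable for \emph{every} $d\geq 3$, which is false for even $d$, so the paper too establishes the statement only for odd $d$. Your proposal correctly isolates the problematic case but does not close it.
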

\begin{proof} 
It is enough to observe that $\Q_{8}\times \Z_{d}$, $d\geq 3$, is a binary solvable group 
$\neq \Q_8$. As mentioned earlier in this paper, any binary solvable group is the full automorphism group of an HCS of odd order. Then, the conclusion immediately follows by Theorem \ref{them3}
\end{proof}

Collecting the above results, we can prove Theorem \ref{them0}.

\begin{proof}[proof of Theorem \ref{them0}]
The first part of the statement is proven in Theorem \ref{them2}. 

Now, let $G$ be a finite group. If $G$ has odd order or it is $\Q_8$, then by Theorems \ref{them1} and \ref{them3} we have that $G$ is the full automorphism group of a suitable HCS of odd order. If $G$ is a binary solvable group $\neq \Q_8$ or AGL(1,p) with $p$ prime, it is known \cite{BRT, BBM} that $G \in \mathcal{G}$. 
\end{proof}

\begin{rem} 
  Note that Theorem \ref{them3} would allow us to solve completely the problem under investigation in this paper, if one could show that any sufficiently large binary group  
has a $1$--rotational action on an HCS of odd order. In this case, given a binary non-solvable group $H$ of order $4m$, we could consider the group $G = H \times \Z_d$ with $d$ sufficiently large to ensure that $G$ has a $1$--rotational action on an HCS($4md+1$). By Theorem \ref{them3} we would have that $H$ is the full automorphism group of an HCS of odd order.
\end{rem}


\begin{thebibliography}{99}


\bibitem{AKN} J. Akiyama, M. Kobayashi and G. Nakamura,
{\it Symmetric Hamilton cycle decompositions of the complete graph},
J. Combin. Des. {\bf12} (2004), 39--45.


\bibitem {AI} B.A. Anderson and E.C. Ihrig,
{\it Every finite solvable group with a unique element of order two,
except the quaternion group, has a symmetric sequencing},
J. Combin. Des. {\bf 1} (1993), 3--14.


\bibitem{BBRT} R.A. Bailey, M. Buratti, G. Rinaldi, T. Traetta,
{\it On $2$-pyramidal Hamiltonian cycle systems}, Bull. Belg. Math. Soc. Simon Stevin, {\bf 21} (2014), 747--758.

\bibitem{BOP} R.A. Bailey, M.A. Ollis and D.A. Preece,
{\it Round-dance neighbour designs from terraces},
Discrete Math. {\bf266} (2003), 69-86.

\bibitem{BW} N. Biggs and A.T. White,
Permutation Groups and Combinatorial Structures.
London Mathematical Society Lecture Note Series no. 33, Cambridge Univ. Press,
New York and London 1979.

\bibitem{BBM} A. Bonisoli, M. Buratti and G. Mazzuoccolo,
{\it Doubly transitive $2$-factorizations},
J. Combin. Des. {\bf15} (2007), 120--132.

\bibitem{BMR} S. Bonvicini, G. Mazzuoccolo and G. Rinaldi, {\it On $2$-Factorizations of the complete
graph: from the $k$-pyramidal to the universal property}, J. Combin. Des. {\bf17} (2009), 211--228

\bibitem{BS}
R.A. Brualdi and M.W. Schroeder,
{\it Symmetric Hamilton cycle decompositions of complete graphs minus a $1$-factor},
J. Combin. Des. {\bf19} (2011), 1--15.

\bibitem{B} M. Buratti, {\it Sharply vertex-transitive Hamiltonian cycle systems of the
complete and cocktail party graph}, preprint.

\bibitem{HCShort} M. Buratti, S. Capparelli, F. Merola, G. Rinaldi and T. Traetta, {\it A collection of results on Hamiltonian cycle systems with a nice automorphism group}, to appear in Electronic Notes in Discrete Mathematics.

\bibitem{BD} M. Buratti and A. Del Fra, {\it Cyclic Hamiltonian cycle systems of the
complete graph}, \textit{Discrete Math.} \textbf{279} (2004), 107--119.

\bibitem{BM}
M. Buratti and F. Merola,
{\it Dihedral Hamiltonian cycle systems of the cocktail party graph},
J. Combin. Des. {\bf21} (2013), 1--23.

\bibitem{BM2} M. Buratti and F. Merola, {\it Hamiltonian cycle systems that are both cyclic and symmetric}, J. Combin. Des. DOI:10.1002/jcd.21351.

\bibitem{BR} M. Buratti and G. Rinaldi,
{\it $1$-rotational $k$-factorizations of the complete graph and new solutions to the Oberwolfach problem},
J. Combin. Des. {\bf16} (2008), 87--100.

\bibitem{BRT} M. Buratti, G. Rinaldi and T. Traetta, {\it Some results on 1-rotational Hamiltonian cycle systems of the complete graph}, J. Combin. Des. 
{\bf 22} (2014), 231--251.




\bibitem{GGL} M.J. Grannell, T.S. Griggs and G.J. Lovegrove, {\it Even-cycle
systems with prescribed automorphism groups}, J. Comb. Des., DOI:
10.1002/jcd.21334

\bibitem{JM}
H. Jordon and J. Morris, Cyclic hamiltonian cycle systems of the complete graph minus a $1$-factor,
\textit{Discrete Math.} \textbf{308} (2008), 2440--2449.



\bibitem{Love08} G.J. Lovegrove,
Combinatorial designs and their automorphism groups, Ph.D. Thesis, The Open University, 2008

\bibitem{Love14} G.J. Lovegrove,
{\it Odd-cycle systems with prescribed automorphism groups}, Discrete Math 314 (2014) 6--13, DOI:10.1016/j.disc.2013.09.006.

\bibitem{L} E. Lucas, Recreations Mathematiques, Vol II, Paris, 1892.

\bibitem{Me3} E. Mendelsohn,
{\em Every  group is the collineation group of some projective plane},
J. Geom. {\bf 2} (1972), 97-106.

\bibitem{Me1} E. Mendelsohn,
{\em On the Groups of Automorphisms of Steiner Triple and Quadruple Systems},
J. Combin. Theory Ser.
A {\bf 25} (1978), 97-104.
\end{thebibliography}
\end{document}